\newtheorem{theorem}{Theorem}[section]
\newtheorem{lemma}[theorem]{Lemma}
\newtheorem{proposition}[theorem]{Proposition}
\newtheorem{definition}[theorem]{Definition}
\newcommand{\C}{\mathbb C}
\newcommand{\R}{\mathbb R}
\begin{document}

\title[Lipschitz Regular complex algebraic sets are smooth. 
 ]{Lipschitz Regular complex algebraic sets are smooth. 
}

\author{L. Birbrair}
\author{A. Fernandes}
\author{L\^e  D. T.}
\author{J. E. Sampaio}
\address{L. ~Birbrair, A. ~Fernandes and L\^e  D. T. - 
Departamento de Matem\'atica,Universidade Federal do Cear\'a Av.
Humberto Monte, s/n Campus do Pici - Bloco 914, 60455-760
Fortaleza-CE, Brazil} 
\email{birb@ufc.br}
 \email{alexandre.fernandes@ufc.br}
\email{ledt@ictp.it}
\address{J. E. Sampaio - Instituto Federal de Educa\c c\~ao, Ci\^encia e Tecnologia do Cear\'a
Estrada do A\c cude do cedro, KM 5 s/n 63900-000
 Quixad\'a - CE, Brazil}
\email{edsontdr@hotmail.com}

\keywords{Lipschitz regular point, algebraic sets}
\subjclass[2010]{14B05; 32S50 }

\begin{abstract}
A classical Theorem of Mumford implies that a topologically regular complex algebraic surface in $\C^3$ with an isolated singular point is smooth. We prove that any Lipschitz regular complex algebraic set is smooth. No restriction on the dimension and no restriction on the singularity to be isolated is needed.
\end{abstract}

\maketitle

\section{Introduction}
 A classical theorem proved by David Mumford in 1961 \cite{M} is stated as follows: \emph{a normal complex algebraic surface with a simply connected link at a point $x$ is smooth in $x$}. This was a pioneer work in 
 the topology of singular algebraic surfaces. An important sequence of works in this area and some applications to the 3-folds topology were deeply stimulated by this theorem.  
 
  From a modern viewpoint this result can be seen as follows:   \emph{a topologically regular normal complex algebraic surface is smooth}. Since, in $\C^3$, a surface is normal if and only if it has isolated singularities, the result can be formulated as follows: \emph{topologically regular complex surface in $\C^3$ with isolated singularity is smooth}. The condition on the singularity to be isolated is important because there are examples of non-smooth and topologically regular surfaces, with non-isolated singularities.  
  
  There are also examples of non smooth surfaces in $\C^4$ (see Section 5 of \cite{T}) with topologically regular isolated singularity .
  
 Here we study Lipschitz regular complex algebraic sets. Lipschitz regularity simply means that the 
 complex algebraic set is a Lipschitz submanifold of $\C^n$. The main result is that Lipschitz regular points of a complex algebraic set are smooth points, i.e. the conclusion of the Mumford 
 theorem holds without any restrictions. 
 
 There are two basic ingredients in the proof. The first one is the theory of normal embedding, created by Birbrair and Mostowski \cite{BM} (The paper contains some lines describing the theory). Using the Bernig-Lytchak \cite{BL} map (see also \cite{BFN}), one can make a reduction of a Lipschitz regular point to its tangent cone. The second ingredient is the extraordinary theorem by David Prill \cite{P}. D. Prill proved that algebraic and topologically regular complex cones are smooth.

\bigskip

\noindent{\bf Acknowledgements}.  We would like to thank Andrei Gabrielov, Walter D. Neumann and Donu Arapura for their interest on this work.

\section{Normally embedded sets}

Given a closed and connected subanalytic subset $X\subset\R^m$ the
\emph{inner metric}  on $X$  is defined as follows: given two points $x_1,x_2\in X$, $d_X(x_1,x_2)$  is the infimum of the lengths of rectifiable paths on $X$ connecting $x_1$ to $x_2$. According to the real cusp $x^2=y^3$, one can see that the inner metric is not necessarily bi-Lipschitz equivalent to the Euclidean metric on $X$. A subanalytic set is called
\emph{normally embedded} if these two metrics (inner and Euclidean)
are bi-Lipschitz equivalent.

\begin{proposition}\label{proposition A}
Let $X\subset\C^n$ be an algebraic subset. Let $x_0\in X$ be such that the reduced tangent cone 
$T_{x_0}X:=|C_{x_0}X|$ is a linear subspace of $\C^n$. If there exists a neighborhood $U$ of $x_o$ in $X$ such  that $U$ is normally embedded, then $x_0$ is a smooth point of $X$.
\end{proposition}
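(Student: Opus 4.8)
The plan is to reduce the statement to the assertion that the \emph{multiplicity} of $X$ at $x_0$ equals one, since by the classical criterion a point of a complex algebraic set is smooth if and only if its multiplicity there is $1$. Normalize so that $x_0=0$ and $L:=T_{x_0}X=\C^d\times\{0\}\subset\C^d\times\C^{n-d}=\C^n$, where $\dim L=\dim_{x_0}X=:d$, and let $\pi\colon\C^n\to L$ be the orthogonal projection, which is $1$-Lipschitz. Because $L^\perp\cap C_{x_0}X=L^\perp\cap L=\{0\}$, the restriction $\pi|_X$ is finite on a small ball $B_\rho=B(0,\rho)$ and is a branched covering of some degree $m$ onto a neighborhood of $0$ in $L$; since $\ker\pi$ is transverse to the tangent cone, $m$ equals the multiplicity of $X$ at $0$. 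Let $\Delta\subset L$ be its discriminant, so that over $L\setminus\Delta$ the map $\pi|_X$ is an unramified $m$-sheeted covering. Assuming $m\ge 2$, I will derive a contradiction with normal embedding; this forces $m=1$ and hence smoothness.

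The first ingredient is a tangency estimate coming from the hypothesis that the reduced tangent cone is exactly $L$: writing a point of $X$ near $0$ as $p=(y,z)$ with $y\in L$ and $z\in L^\perp$, the condition $T_{x_0}X=L$ says $p/\lvert p\rvert\to L$, that is $\lvert z\rvert=o(\lvert y\rvert)$ as $p\to 0$. Now fix a small radius $r$ and, anticipating the next step, choose $y$ with $\lvert y\rvert=r$ whose direction $u=y/\lvert y\rvert$ lies outside the tangent cone $C_0\Delta$ of the discriminant; then $y\notin\Delta$, so the fibre $\pi^{-1}(y)\cap X\cap B_\rho$ contains two distinct points $p_1=(y,z_1)$ and $p_2=(y,z_2)$. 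The tangency estimate gives $\lvert p_1-p_2\rvert=\lvert z_1-z_2\rvert=o(r)$, so in the Euclidean metric these two points become arbitrarily close compared with $r$.

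The heart of the argument is a matching lower bound for the inner distance, obtained by keeping $y$ away from the discriminant. Since $\Delta$ is a proper complex-analytic germ at $0$ and $u\notin C_0\Delta$, one has $\mathrm{dist}(y,\Delta)\ge cr$ for a fixed $c>0$ and all small $r$, so the open ball $B(y,cr)\subset L$ is convex and disjoint from $\Delta$. Take $\rho$ small enough that $B_\rho\cap X\subseteq U$, and let $\alpha$ be any path in $U$ from $p_1$ to $p_2$. If $\alpha$ leaves $B_\rho$ its length is at least $\rho-r\gg r$, so assume $\alpha\subset B_\rho$ and consider the loop $\gamma=\pi\circ\alpha$ based at $y$. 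If $\alpha$ avoids $\pi^{-1}(\Delta)$, then $\gamma$ lies in $L\setminus\Delta$ and lifts $p_1$ to $p_2$, so it has nontrivial monodromy and cannot be contained in the simply connected set $B(y,cr)$; hence $\gamma$ leaves $B(y,cr)$ and $\mathrm{length}(\gamma)\ge cr$. If instead $\alpha$ meets $\pi^{-1}(\Delta)$, then $\gamma$ meets $\Delta$, so again $\mathrm{length}(\gamma)\ge\mathrm{dist}(y,\Delta)\ge cr$. As $\pi$ is $1$-Lipschitz, $\mathrm{length}(\alpha)\ge\mathrm{length}(\gamma)\ge cr$ in every case, whence the inner distance satisfies $d_U(p_1,p_2)\ge cr$.

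Combining the two estimates, $d_U(p_1,p_2)/\lvert p_1-p_2\rvert\ge cr/o(r)\to\infty$ as $r\to 0$, contradicting the bi-Lipschitz equivalence of the inner and Euclidean metrics on the normally embedded neighborhood $U$. Hence $m=1$ and $x_0$ is a smooth point of $X$. I expect the main obstacle to be precisely this inner-distance lower bound: one must argue that changing sheets of the covering forces the projected loop either to encircle the discriminant or to meet it, and that a direction avoiding $C_0\Delta$ keeps $\Delta$ at distance $\gtrsim r$; the monodromy/convexity dichotomy above is what makes this rigorous, while the remaining ingredients (finiteness of $\pi|_X$, the tangency estimate, and the multiplicity-one criterion for smoothness) are standard.
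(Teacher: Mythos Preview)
Your proof is correct and follows essentially the same approach as the paper: project orthogonally onto the linear tangent cone, interpret the multiplicity as the degree of the resulting branched cover, pick a direction off the tangent cone of the discriminant, and show that two distinct preimages over $y=ru$ are $o(r)$ apart in the Euclidean metric but at inner distance $\gtrsim r$ because any connecting path projects to a loop that must exit a $\Delta$-free ball of radius $\sim r$. Your case split (path leaves $B_\rho$, path meets $\pi^{-1}(\Delta)$, or path avoids $\pi^{-1}(\Delta)$ and hence has nontrivial monodromy) is in fact slightly more explicit than the paper's version of the same argument.
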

\begin{proof} Since $T_{x_0}X$ is a linear subspace  of $\C^n$, we can consider the orthogonal projection
$$P\colon \C^n\rightarrow T_{x_0}X.$$
We may suppose that $x_0=0$ and $P(x_0)=0$. 

Notice that the germ of the restriction of the orthogonal projection $P$ to $X$ has the following properties:
\begin{enumerate}
\item It is finite complex analytic germ of map;
\item If $\gamma :[0,\varepsilon)\rightarrow X$ is a real analytic arc, such that $\gamma(0)=0$, then the  arcs 
$\gamma$ and $P\circ\gamma$ are tangent at $0$.
\end{enumerate}

Then the germ at $0$ of $P_{| X}\colon X\rightarrow T_{0}X$ is a ramified cover and the ramification locus is 
the germ of a codimension $1$ complex analytic subset $\Sigma$ of the plane $T_{0}X$. 

The multiplicity of $X$ at $0$ can be interpreted as the degree $d$ of this germ of ramified covering map, i.e. 
there are open neighborhoods $U_1$ of $0$ in $X$ and $U_2$ of $0$ in $T_{0}X$, such that $d$ is the
degree of the topological covering:
$$P_{| X}\colon X\cap U_1\setminus P_{| X}^{-1}(\Sigma)\rightarrow T_{0}X\cap U_2\setminus \Sigma.$$ 
Let us suppose that the degree $d$ is bigger than 1. Since $\Sigma$ is a codimension $1$ complex analytic subset of the space $T_{0}X$, there exists a unit tangent vector $v_0\in T_{0}X\setminus C_0\Sigma$.
where $C_0\Sigma $ is the tangent cone of $\Sigma$ at $0$. 

Since $v_0$ is not tangent to $\Sigma$ at $0$, there exists a positive real number $k$ such that the real cone $$\{v\in T_{0}X \ | \ |v-tv_0|< tk \ \forall \  0<t<1\}$$ does not intersect the set $\Sigma$. Since we have assumed that the degree $d\geq 2$, we have two (different) liftings $\gamma_1(t)$ and $\gamma_2(t)$ of the half-line $r(t)=tv_0$,  i.e. $P(\gamma_1(t))=P(\gamma_2(t))=tv_0$ and they satisfy $\|\gamma_i(t)-0\|=t, \ i=1,2.$   Since $P$ is the orthogonal projection on the reduced tangent cone $T_{0}X $, the vector $v_0$ is the unit tangent vector to the images 
$P\circ \gamma_1$ and $P\circ \gamma_2$ of the arcs $\gamma_1$ and $\gamma_2$ at $0$. By
construction, we have $\rm{dist}(\gamma_i(t),P_{| X}^{-1}(\Sigma))\geq kt$ ($i=1,2$).

On the other hand, any path in $X$ connecting $\gamma_1(t)$ to $\gamma_2(t)$ is the lifting of a loop, based at the point $tv_0$, which is not contractile in the germ, at $0$, of  $T_{0}X\setminus {\Sigma}$. Thus such a path must be longer than $kt$. It implies that the inner distance, $\rm{d_{inner}}(\gamma_1(t),\gamma_2(t))$, in $X$, between $\gamma_1(t)$ and $\gamma_2(t)$, is at least $kt$. But, since $\gamma_1(t)$ and $\gamma_2(t)$ are tangent at $0$, that is $\displaystyle\frac{|\gamma_1(t)-\gamma_2(t)|}{t}\to 0 \ \mbox{as} \ t\to 0^+,$ and $k>0$, we get a contradiction to  the fact that $X$ is normally embedded near $0$.
\end{proof}

\section{Main result}

All the sets considered in the paper are supposed to be equipped with the Euclidean metric. When we consider the inner metric, then we will emphasize it clearly.

\begin{definition}{\rm A subanalytic subset $X\subset\R^n$ is called \emph{Lipschitz regular at} $x_0\in X$ if there is an open neighborhood $U$ of $x_0$ in $X$ which is subanalytically bi-Lipschitz homeomorphic to an Euclidean ball.}
\end{definition}

\begin{theorem} Let $X\subset\C^n$ be a complex algebraic variety. If $X$ is Lipschitz regular at $x_0\in X$, then $x_0$ is a smooth point of $X$.
\end{theorem}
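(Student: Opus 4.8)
The plan is to reduce the general Lipschitz regular point to the situation covered by Proposition~\ref{proposition A}, namely the case in which the reduced tangent cone is a linear subspace. The two ingredients announced in the introduction are the Bernig--Lytchak map, which relates the inner metric near $x_0$ to the tangent cone, and Prill's theorem, which says that a topologically regular complex algebraic cone is smooth (equivalently, a linear subspace).

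\medskip

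\noindent\textbf{Step 1: Lipschitz regularity is inherited by the tangent cone.} First I would observe that if $X$ is Lipschitz regular at $x_0$, then in particular $X$ is topologically regular at $x_0$, so a small punctured neighborhood of $x_0$ deformation retracts onto the link, which must be a topological sphere. I would then invoke the Bernig--Lytchak construction \cite{BL} (cf.\ \cite{BFN}): the germ of $(X,x_0)$ with its \emph{inner} metric is bi-Lipschitz homeomorphic, via a subanalytic map, to the germ of its tangent cone $C_{x_0}X$ at the vertex, equipped with its inner metric. A subanalytic bi-Lipschitz homeomorphism of germs is in particular a homeomorphism of germs, so Lipschitz (hence topological) regularity of $(X,x_0)$ forces the tangent cone $C_{x_0}X$ to be topologically regular at its vertex as well. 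Being the tangent cone of a complex algebraic set, $C_{x_0}X$ is itself a complex algebraic cone.

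\medskip

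\noindent\textbf{Step 2: Apply Prill's theorem to linearize the tangent cone.} Now $C_{x_0}X$ is a complex algebraic cone that is topologically regular at its vertex. By Prill's theorem \cite{P}, such a cone is smooth at the vertex, and a cone that is smooth at its vertex is a linear subspace; hence the reduced tangent cone $T_{x_0}X = |C_{x_0}X|$ is a linear subspace of $\C^n$.

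\medskip

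\noindent\textbf{Step 3: Feed the linear tangent cone into Proposition~\ref{proposition A}.} It remains to check the normal-embedding hypothesis of Proposition~\ref{proposition A}. Since $X$ is Lipschitz regular at $x_0$, a neighborhood $U$ of $x_0$ in $X$ is subanalytically bi-Lipschitz homeomorphic to a Euclidean ball; in particular the inner metric on $U$ is bi-Lipschitz equivalent to the pullback of the Euclidean metric on the ball, which is the standard flat metric, so the inner and outer metrics on $U$ are bi-Lipschitz equivalent, i.e.\ $U$ is normally embedded. With $T_{x_0}X$ a linear subspace (Step~2) and $U$ normally embedded, Proposition~\ref{proposition A} applies directly and yields that $x_0$ is a smooth point of $X$.

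\medskip

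The main obstacle I anticipate is Step~1: making precise the claim that the Bernig--Lytchak map transports the \emph{topological} (and metric) type of the germ faithfully onto the tangent cone, so that regularity is genuinely inherited. One must be careful that the relevant comparison is between \emph{inner} metrics, and that the subanalytic bi-Lipschitz equivalence furnished by \cite{BL} is a homeomorphism of germs and not merely a metric comparison on a dense or generic part. Once this transport of topological regularity to the tangent cone is secured, Steps~2 and~3 are immediate consequences of Prill's theorem and Proposition~\ref{proposition A}, respectively, and the only remaining routine point is the elementary observation that a complex algebraic cone smooth at its vertex is linear.
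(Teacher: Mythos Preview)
Your overall strategy is exactly the paper's: use the Bernig--Lytchak derivative to transfer Lipschitz regularity to the tangent cone, apply Prill's theorem to conclude the reduced tangent cone is linear, and then invoke normal embedding together with Proposition~\ref{proposition A}. Steps~2 and~3 are correct and match the paper (your Step~3 argument is precisely the content of the paper's Lemma~\ref{neighborhood}).

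However, your Step~1 misstates what the Bernig--Lytchak result actually provides. You claim that \cite{BL} gives a subanalytic bi-Lipschitz homeomorphism between the germ $(X,x_0)$ (with inner metric) and its tangent cone $C_{x_0}X$ (with inner metric). This is \emph{not} what \cite{BL} says, and in general it is false: a subanalytic germ need not be bi-Lipschitz equivalent, or even homeomorphic, to its tangent cone (think of a $\beta$-horn for $\beta>1$, whose tangent cone is a half-line). What \cite{BL} actually gives is functorial: a subanalytic bi-Lipschitz homeomorphism $h\colon (U,x_0)\to (B,0)$ between germs has a well-defined \emph{derivative}
\[
dh\colon |C_{x_0}X|\longrightarrow |C_0B|=\R^N,
\]
and this derivative is itself a bi-Lipschitz homeomorphism between the reduced tangent cones. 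That is the correct route to Step~1, and it is exactly what the paper does: one applies $dh$ to the already-given bi-Lipschitz homeomorphism $h$ from $U$ to a Euclidean ball, and concludes directly that $|C_{x_0}X|$ is bi-Lipschitz homeomorphic to $\R^N$, hence topologically regular at the vertex. The concern you raise at the end (whether the Bernig--Lytchak map really furnishes a homeomorphism of germs onto the tangent cone) is therefore based on a mischaracterization; once you use the derivative $dh$ of $h$ rather than an alleged map from $X$ to $C_{x_0}X$, the issue disappears and Step~1 becomes immediate.
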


\begin{proof} Let $X\subset\C^n$ be a complex algebraic variety. We start the proof from the following lemma:

\begin{lemma}\label{neighborhood} If $X$ is Lipschitz regular at $x_0\in X$, then there exists a neighborhood $U$ of $x_0$ in $X$  which is  normally embedded in $\C^n$.
\end{lemma}

\begin{proof}[Proof of the Lemma] Let $X$ be Lipschitz regular at $x_0$. By definition, there exists a neighborhood $U$ of $x_0$ in $X$ and a subanalytic bi-Lipschitz homeomorphism $\psi : B \rightarrow U$, where $B$ is an Euclidean ball. Since $\psi$ induces a bi-Lipschitz homeomorphism between the metric spaces $(B,d_B)$ ($B$ equipped with its inner metric) and $(U,d_U)$ ($U$ equipped with its inner metric) (see \cite{BM}), we have positive real constants $\lambda_1$ and $\lambda_2$ such that 
$$ \lambda_1|p-q|\leq |\psi(p)-\psi(q)|\leq \lambda_2|p-q| \ \forall p,q\in B$$ and $$\lambda_1d_B(p,q)\leq d_U(\psi(p),\psi(q))\leq \lambda_2 d_B(p,q) \ \forall p,q\in B.$$ On the other hand,  Euclidean balls are normally embedded; or more precisely, 
$d_B(p,q)=|p-q|$ for all $p,q\in B$. By inequalities above, we have that 
$$d_U(\psi(p),\psi(q))\leq\frac{\lambda_2}{\lambda_1}|\psi(p)-\psi(q)| \ \forall \ p,q\in B.$$ In other words, $U$ is normally embedded.
\end{proof}

An algebraic subset of $\C^n$ is called a \emph{complex cone} if it is a union of one-dimensional linear subspaces of $\C^n$. The next result was proved by David Prill in \cite{P}.

\begin{lemma}[Theorem of Prill \cite{P}] \label{prill} Let $V\subset \C^n$ be a complex cone. If $0\in V$ has a neighborhood homeomorphic to a Euclidean ball, then $V$ is a linear subspace of $\C^n$
\end{lemma} 

Let us suppose that $X$ is Lipschitz regular at $x_0$. Let $C_{x_0}X$ be the tangent cone
of $X$ at $x_0$. Let $h\colon U\rightarrow B$ be 
a subanalytic bi-Lipschitz homeomorphism where $U$ is a neighborhood of $x_0$ in 
$X$ and $B\subset\R^N$ is an Euclidean ball centered at origin $0\in\R^N$. Let us suppose that $h(x_0)=0$.  
Then the derivative of $h$, 
$dh\colon |C_{x_0}X| \rightarrow T_0B,$
 defined by Bernig and Lytchak \cite{BL} (see also \cite{BFN}), is a bi-Lipschitz homeomorphism between the reduced tangent cones  $|C_{x_0}X|$  and $T_0B=\R^N$. In particular, it proves that $|C_{x_0}X|$ is also bi-Lipschitz regular at $x_0$. By the Theorem of Prill (Lemma \ref{prill}) , 
 we have that the reduced cone $|C_{x_0}X|$ is a linear subspace of $\C^n$. In order to finish the proof we just need to recall Lemma \ref{neighborhood} and Proposition \ref{proposition A}.
\end{proof}

\end{document}